\newtheorem{theorem}{Theorem}[section]
\newtheorem*{Acknowledgement}{\textnormal{\textbf{Acknowledgement}}}
\theoremstyle{definition}
\newtheorem{definition}[theorem]{Definition}
\newtheorem{Open Prob}[theorem]{Open Problem}
\theoremstyle{remark}
\numberwithin{equation}{section}
\def\DJ{\leavevmode\setbox0=\hbox{D}\kern0pt\rlap{\kern.04em\raise.188\ht0\hbox{-}}D}
\begin{document}

\title[A short proof of the metrizability of $\mathcal{F}$-metric spaces]{A short proof of the metrizability of $\mathcal{F}$-metric spaces}

\author[S.\ Som, L.K.\ Dey,]
{Sumit Som$^{1}$, Lakshmi Kanta Dey$^{2}$}

\address{{$^{1}$} Sumit Som,
                    Department of Mathematics,
                    National Institute of Technology
                    Durgapur, India.}
                    \email{somkakdwip@gmail.com}
\address{{$^{2}$} Lakshmi Kanta Dey,
                    Department of Mathematics,
                    National Institute of Technology
                    Durgapur, India.}
                    \email{lakshmikdey@yahoo.co.in}

\keywords{ $\mathcal{F}$-metric space, metrizability.\\
\indent 2010 {\it Mathematics Subject Classification}. $54$E$35$, $54$H$99$.}

\begin{abstract}
The main purpose of this manuscript is to provide a short proof of the metrizability of $\mathcal{F}$-metric spaces introduced by Jleli and Samet in \cite[\, Jleli, M. and Samet, B., On a new generalization of metric spaces, J. Fixed Point Theory Appl. (2018) 20:128]{JS1}.

\end{abstract}

\maketitle

\setcounter{page}{1}

\section{\bf Introduction}
\baselineskip .55 cm
Recently, Jleli and Samet \cite{JS1} proposed a new generalization of  usual metric space concept. By means of a certain class of functions, the authors defined the notion of an $\mathcal{F}$-metric space. Firstly, we will recall the definition of such kind of spaces. Let $\mathcal{F}$ denote the class of functions $f:(0,\infty)\rightarrow \mathbb{R}$ which satisfy the following conditions :

($\mathcal{F}_1$) $f$ is non-decreasing, i.e., $ 0<s<t\Rightarrow f(s)\leq f(t)$.

($\mathcal{F}_2$) For every sequence $\{t_n\}_{n\in \mathbb{N}}\subseteq (0,+\infty)$, we have
$$\lim_{n\to +\infty}t_n=0 \Longleftrightarrow \lim_{n\to +\infty}f(t_n)=-\infty.$$


Now due to Jleli and Samet, the definition of an $\mathcal{F}$-metric space is as follows:

\begin{definition} \cite{JS1} \label{D1}
Let $X$ be a non-empty set and $D:X\times X\rightarrow [0,\infty)$ be a given mapping. Suppose there exists $(f,\alpha)\in \mathcal{F}\times [0,\infty)$ such that
\begin{enumerate}
\item[(D1)] $D(x,y)=0\Longleftrightarrow x=y~\forall~(x,y)\in X \times X$.
\item[(D2)] $D(x,y)=D(y,x),~$ $\forall~ (x,y)\in X \times X$.
\item[(D3)] For every $(x,y)\in X\times X$, for each $N\in \mathbb{N}, N\geq2$ and for every $(u_i)_{i=1}^{N}\subseteq X $ with $(u_1,u_N)=(x,y)$, we have
$$D(x,y)>0 \Longrightarrow f(D(x,y))\leq f\left(\sum_{i=1}^{N-1}D(u_i,u_{i+1})\right)+ \alpha.$$
\end{enumerate}
Then $D$ is said to be an $\mathcal{F}$-metric on $X$ and the pair $(X,D)$ is said to be an $\mathcal{F}$-metric space.
\end{definition}
In our earlier manuscript \cite{SAL}, we have already proved that this new generalization of metric space is indeed metrizable by a metric $d:X\times X\rightarrow \mathbb{R}$ defined as follows 
\begin{equation}
d(x,y)=\mbox{inf}\left\{\sum_{i=1}^{N-1}D(u_i, u_{i+1}): N\in \mathbb{N}, N\geq 2, \{u_i\}_{i=1}^{N}\subseteq X~\mbox{with}~(u_1,u_N)=(x,y)\right\}.\label{e}
\end{equation}
In that proof, we showed that $\tau=\tau_{\mathcal{F}}$ where $\tau$ denotes the topology generated by $d$ in \ref{e} and $\tau_{\mathcal{F}}$ denotes the topology generated by the $\mathcal{F}$-metric $D.$ We also showed that the notions of Cauchy sequence, completeness, Banach contraction principle are equivalent to that of usual metric spaces. In this manuscript, we will give a very short proof of the metrizability of $\mathcal{F}$-metric spaces. We use Chittenden's metrization theorem \cite{CD} in our proof. Further, for more details one can see \cite{FR}. 

Before proceeding to our main result, we first recall the metrization result due to Chittenden \cite{CD}. Let $X$ be a topological space and $F:X \times X\rightarrow [0,\infty)$ be a distance function on $X$. If the distance function $F$ satisfies the following conditions:
\begin{enumerate}
\item[(i)] $F(x,y)=0\Longleftrightarrow x=y~\forall~(x,y)\in X \times X$.

\item[(ii)] $F(x,y)=F(y,x),~$ $\forall~ (x,y)\in X \times X$.

\item[(iii)](Uniformly regular) For every $\varepsilon>0$ and $x,y,z \in X$ there exists $\phi(\varepsilon)>0$ such that if $F(x,y)<\phi(\varepsilon)$ and $F(y,z)<\phi(\varepsilon)$ then $F(x,z)<\varepsilon$
\end{enumerate}
then the topological space $X$ is metrizable. 


\section{\bf Main Result}
In this section, we will provide a short proof of the metrizability of $\mathcal{F}$-metric spaces.

\begin{theorem} \label{FMS}
Let $X$ be an $\mathcal{F}$-metric space with $(f,\alpha)\in \mathcal{F}\times [0,\infty)$. Then $X$ is metrizable.
\end{theorem}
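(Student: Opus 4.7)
The plan is to verify the three hypotheses of Chittenden's metrization theorem directly for the distance function $D$, where $X$ is given the natural topology induced by $D$ (that is, the topology generated by the $D$-balls, as in Jleli--Samet). Conditions (i) and (ii) of Chittenden's theorem are literally the axioms (D1) and (D2), so the entire substance of the proof is to establish uniform regularity, condition (iii).

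For (iii), I would specialize (D3) to the three-point chain $u_1 = x$, $u_2 = y$, $u_3 = z$, which, whenever $D(x,z) > 0$, gives
$$f(D(x,z)) \;\leq\; f\bigl(D(x,y) + D(y,z)\bigr) + \alpha.$$
Given $\varepsilon > 0$, property $(\mathcal{F}_2)$ supplies a $\delta > 0$ such that $f(t) < f(\varepsilon) - \alpha$ whenever $0 < t < \delta$. I would then take $\phi(\varepsilon) := \delta/2$. If $D(x,y) < \phi(\varepsilon)$ and $D(y,z) < \phi(\varepsilon)$, then $0 < D(x,y) + D(y,z) < \delta$, so the displayed inequality combined with the choice of $\delta$ yields $f(D(x,z)) < f(\varepsilon)$; monotonicity $(\mathcal{F}_1)$ then forces $D(x,z) < \varepsilon$. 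The case $D(x,z) = 0$ is immediate.

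The only conceptual subtlety, and the reason the proof works in such a direct way, is that the additive constant $\alpha$ in (D3) has to be absorbed. This absorption is possible precisely because $(\mathcal{F}_2)$ is stronger than continuity at $0$: it forces $f(t) \to -\infty$ as $t \to 0^+$, so $f(t)$ can be driven below \emph{any} prescribed level, including $f(\varepsilon) - \alpha$, by shrinking $t$. Once this observation is in hand, the argument is a few lines, and Chittenden's theorem delivers metrizability without any construction of an auxiliary metric such as the infimum metric $d$ in \eqref{e} used in \cite{SAL}. I do not foresee any real obstacle beyond correctly packaging this $\alpha$-absorption.
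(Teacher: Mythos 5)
Your proposal is correct and follows essentially the same route as the paper: both verify Chittenden's hypotheses for $D$ itself, apply (D3) to the three-point chain $x,y,z$, use $(\mathcal{F}_2)$ to find $\delta$ with $f(t)<f(\varepsilon)-\alpha$ for $0<t<\delta$, set $\phi(\varepsilon)=\delta/2$, and conclude via $(\mathcal{F}_1)$. No substantive differences.
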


\begin{proof}
Let $X$ be an $\mathcal{F}$-metric space with $(f,\alpha)\in \mathcal{F}\times [0,\infty)$. By the definition of an $\mathcal{F}$-metric space, the distance function $D:X \times X\rightarrow [0,\infty)$ satisfies the first two conditions of Chittenden's metrization result i.e,
\begin{enumerate}
\item[(i)] $D(x,y)=0\Longleftrightarrow x=y~\forall~(x,y)\in X \times X$;

\item[(ii)] $D(x,y)=D(y,x),~$ $\forall~ (x,y)\in X \times X$.
\end{enumerate}
Now we will prove the third condition i.e., the `uniformly regular' condition. Let $\varepsilon>0$ and $x,y,z \in X.$ If $x=z$, then $D(x,z)=0$. So in this case $\phi(\varepsilon)=c$ where $c$ is any positive real number will serve the purpose. Now let $x\neq z.$ Then $D(x,z)>0.$ So by the definition of an $\mathcal{F}$-metric space we have,
\begin{equation}
f(D(x,z))\leq f(D(x,y)+D(y,z))+\alpha.
\label{aa}
\end{equation}
Now by the $\mathcal{F}_2$ condition, for $(f(\varepsilon)-\alpha)\in \mathbb{R}$ there exists $\delta>0$ such that $0<t<\delta \Rightarrow f(t)<f(\varepsilon)-\alpha.$ Now let us choose $\phi(\varepsilon)=\frac{\delta}{2}.$ If $D(x,y)< \frac{\delta}{2}$ and $D(y,z)<\frac{\delta}{2}$ then $D(x,y)+D(y,z)<\delta.$ So by the equation \ref{aa}, we have 
$$ f(D(x,z))<f(\varepsilon)$$
$$\Rightarrow D(x,z)<\varepsilon.$$
This shows that the distance function $D$ of an $\mathcal{F}$-metric space satisfies the uniformly regular condition. Consequently,  by Chittenden's metrization result we can conclude that the $\mathcal{F}$-metric space $X$ is metrizable.
\end{proof}

\begin{Acknowledgement}
 The Research is funded by the Council of Scientific and Industrial Research (CSIR), Government of India under the Grant Number: $25(0285)/18/EMR-II$. 
\end{Acknowledgement}

\bibliographystyle{plain}

\end{document}